\newcommand{\mSigma}{\mat{\Sigma}}
\author{David F.~Gleich}
\title{Better than best low-rank approximation with the singular value decomposition}
\begin{document}
\maketitle
	
	\begin{abstract}
	The Eckhart-Young theorem states that the best low-rank approximation of a matrix can be constructed from the leading singular values and vectors of the matrix.  Here, we illustrate that the practical implications of this result crucially depend on the organization of the matrix data. In particular, we will show examples where a rank $2$ approximation of the matrix data in a different representation more accurately represents the entire matrix than a rank $5$ approximation of the original matrix data -- even though both approximations have the same number of underlying parameters. Beyond images, we show examples of how flexible orientation enables better approximation of time series data, which suggests additional applicability of the findings. Finally, we conclude with a theoretical result that the effect of data organization can result in an unbounded improvement to the matrix approximation factor as the matrix dimension grows. 
	\smallskip 
	
		\noindent {\hfill \emph{In memory of Gene H. Golub for what would have been his 23rd birthday.} \hfill}
	\end{abstract}

\section{Introduction}

In data analysis, we often approximate a complex data set with something easier to understand or manipulate. When the data can be organized into a matrix, then a common paradigm is to compute a low-rank -- and hence low-parameter -- approximation of the data. If $\mX$ is the $m \times n$ matrix of data  
then we seek to find a matrix $\mY$ where $\mY$ is \emph{low-rank}, as in the following problem 
\[ \text{ find } \mY \text{ with rank} \le k \text{ to minimize } \sum\nolimits_{ij} |X_{ij} - Y_{ij}|^2. \]
The idea is that $\mY$ should highlight the most dominant effects in $\mX$ and this is closely related to principal components analysis (PCA).\footnote{For additional examples, and a critique of findings in PCA, see \citet{Shinn-2023-pca}.}
	
The solution to this problem is given by the leading singular vectors of a matrix~\cite{Eckart-1936-rank-r}. If $\vu_1, \vu_2, \ldots$ are the leading left singular vectors of $\mX$, $\vv_1, \vv_2, \ldots$ are the leading right singular vectors, and $\sigma_1 \ge \sigma_2 \ldots \ge 0$ are the leading singular values in descending order then the solution $\mY = \sum_{i=1}^r \sigma_i \vu_i \vv_i^T $.  Algorithms to compute the $\vu$s, $\vv$s, and $\sigma$s are available in all technical computing languages (Matlab, Julia, Python, Fortran, R, etc.) for both dense and sparse matrices $\mX$ helping to promote and sustain the ubiquity of such analysis as well as its use in upstream statistical scenarios such as PCA.\footnote{This problem is stated in the so-called Frobenius norm for simplicity. Those familiar with the area should recognize that the results will translate to any unitary invariant norm as in the \citet{Mirsky-1960-svd} generalization.} 

An important and under-appreciated aspect of this result is that it depends critically on how the data in $\mX$ are organized into a matrix. Consider the following two tables of numbers represented as matrices
\[ \mX_1 = \left[ \begin{array}{ccc|ccc} -3 & -6 & -9 & -12 & -15 & -18 \\ 
-2 & -4 & -6 & -8 & -10 & -12 \\ 
-1 & -2 & -3 & -4 & -5 & -6 \\ 
\midrule
1 & 2 & 3 & 4 & 5 & 6 \\ 
2 & 4 & 6 & 8 & 10 & 12 \\ 
3 & 6 & 9 & 12 & 15 & 18 
\end{array} \right] 
\qquad \mX_2 = \left[ \begin{array}{c|c|c|c}
 -3 & 1 & -12 & 4 \\ 
-2 & 2 & -8 & 8 \\ 
-1 & 3 & -4 & 12 \\ 
-6 & 2 & -15 & 5 \\ 
-4 & 4 & -10 & 10 \\ 
-2 & 6 & -5 & 15 \\ 
-9 & 3 & -18 & 6 \\ 
-6 & 6 & -12 & 12 \\ 
-3 & 9 & -6 & 18
\end{array} \right]. \] 
	\marginnote[-5ex]{David F.~Gleich, Purdue University, Computer Science.
	\texttt{dgleich@purdue.edu}. Supported in part by NSF IIS-2007481, DOE DE-SC0023162 and IARPA AGILE.}
They contain exactly the same set of data, but are organized differently. In $\mX_2$, each square block of $\mX_1$ is organized into a long vector. The best rank-1 approximations of the two matrices are 
\[ \mY_1 = \left[ \begin{array}{ccc|ccc} -3 & -6 & -9 & -12 & -15 & -18 \\ 
-2 & -4 & -6 & -8 & -10 & -12 \\ 
-1 & -2 & -3 & -4 & -5 & -6 \\ 
\midrule
1 & 2 & 3 & 4 & 5 & 6 \\ 
2 & 4 & 6 & 8 & 10 & 12 \\ 
3 & 6 & 9 & 12 & 15 & 18 
\end{array} \right] 
\qquad \mY_2 = \left[ \begin{array}{c|c|c|c}
-3.1 & 3.1 & -7.5 & 7.5 \\ 
-3.1 & 3.1 & -7.5 & 7.5 \\ 
-3.1 & 3.1 & -7.5 & 7.5 \\ 
-4.2 & 4.2 & -9.9 & 9.9 \\ 
-4.2 & 4.2 & -9.9 & 9.9 \\ 
-4.2 & 4.2 & -9.9 & 9.9 \\ 
-5.2 & 5.2 & -12.3 & 12.3 \\ 
-5.2 & 5.2 & -12.3 & 12.3 \\ 
-5.2 & 5.2 & -12.3 & 12.3
\end{array} \right] 
\]

The matrix $\mX_1$ is rank-1, so it can be represented without any error as a rank-1 matrix. Despite representing exactly the same numbers, the matrix $\mX_2$ is rank-4, consequently it must be approximated with some error as a rank-1 matrix. Note that $\normof[F]{\mX_2 - \mY_2}^2 = 378$, which is not close to zero.  Note further that the \emph{inexact} rank-1 representation of $\mY_2$ has 13 parameters, compared with 12 parameters for the exact rank-1 representation of $\mX_1$. 

Consequently, changing the representation of a matrix impacts the results of low-rank approximation with the singular value decomposition. While this statement is known~\cite{VanLoan-1993-kronecker}, it is frequently overlooked when comparing to low-rank approximation via the SVD. We show that the impact can be dramatic. 

\section{A case study in images}
A famous example demonstrating low-rank approximation via the SVD is to apply it to a matrix representation of an image (e.g.~\citet{moler2004-matlab}). We illustrate this with a 240,000 pixel image in Figure~\ref{fig:svd-tiles}(A) and (C,D,E) for rank $5$, $15$, $25$ approximations. The rank-5 approximation (5000 parameters) is not useful.  The rank-15 approximation (15000 parameters) smears details substantially. The rank-25 approximation (25000 parameters) is passable.  

If, instead, the image is represented as a matrix-of-tiles (illustrated in Figure~\ref{fig:svd-tiles}(B)), then we have a scenario with two organizations of the same numbers again. We find that using the matrix-of-tiles approach gives better approximations -- both subjectively and quantitatively as illustrated in Figure~\ref{fig:svd-tiles}(F,G,H). The rank-2 approximation (5000 parameters) is almost useful whereas both the rank-3 (7500 parameters) and rank-4 approximations (10000 parameters) are as good or better than the rank-25 approximation of the original matrix -- despite using only a small fraction of the parameters.  

\paragraph{A ubiquitous finding} This finding is ubiquitous across a pool of over 48,000 images from the validation set in the ImageNet database~\cite{Deng2009-imagenet} where there was some tiled approximation that was better at achieving a 5 or 10\% relative error in 98.6\% of images. 

To determine this result, for each image in the validation set of ImageNet, we computed a tiled approximation at tile sizes 7, 11, 15, 19, 23, 27, 31, 35, 39, 43. Then we checked if any of these tile sizes gave a lower parameter 5\% or 10\% relative error approximation (measured in the Frobenius norm). This means that for 1.4\% of images, the traditional SVD approximation was superior in all of these tests, and for the overwhelming majority, the tiled approximation was superior. The choice of tile sizes was designed to avoid any potentially confounding issues with JPEG compression and its 8-by-8 tile based compression. Additional ad hoc tests on uncompressed source data showed that this result is unlikely to have been produced as an artifact of JPEG compression, although that possibility has not been completely excluded. 


\begin{fullwidthfigure}[t]
\includegraphics[width=0.5\linewidth]{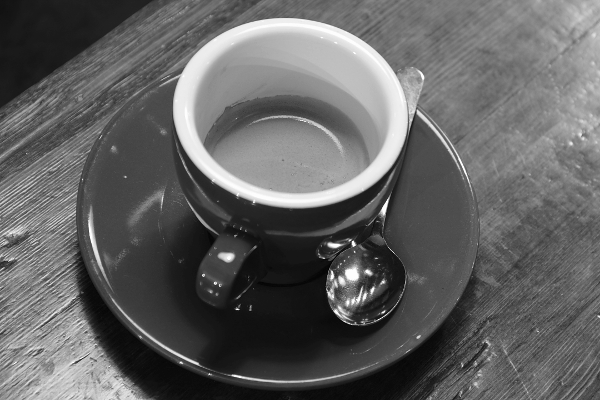}%
\adjustbox{valign=b}{
\begin{tabular}{@{}c@{}}
\includegraphics[width=0.5\linewidth]{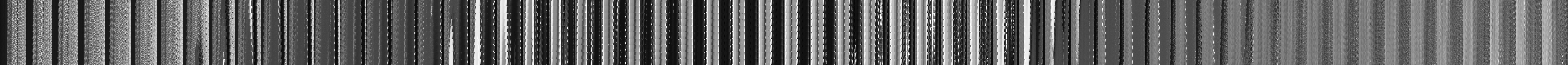} \\
$\uparrow$ \\ 
\includegraphics[width=0.4\linewidth]{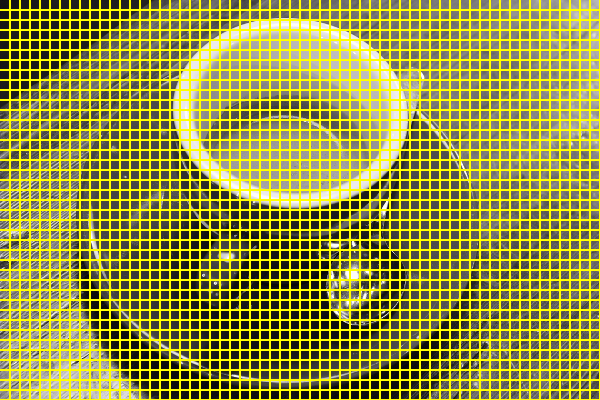} \\ 
\end{tabular}}
\begin{minipage}{0.5\linewidth} \raggedright \footnotesize 
(A) A 400 $\times$ 	600 matrix that represents an image of a coffee cup with 240,000 parameters.
\end{minipage}%
\begin{minipage}{0.5\linewidth} \raggedright \footnotesize 
(B) A 100 $\times$ 	2400 matrix of tiles that represents each $10 \times 10$ tile of the image in a column. The matrix still has 240,000 parameters.
\end{minipage}%
\bigskip

\includegraphics[width=0.33\linewidth]{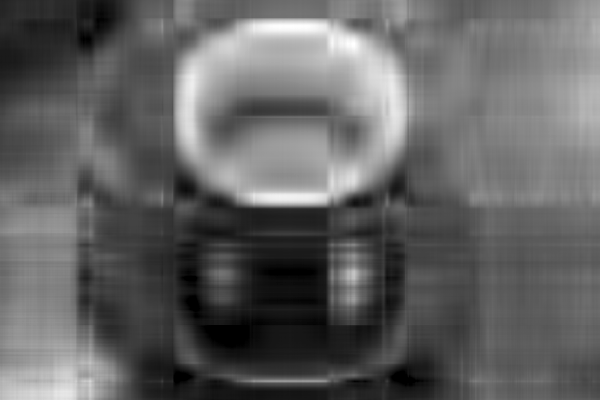}%
\includegraphics[width=0.33\linewidth]{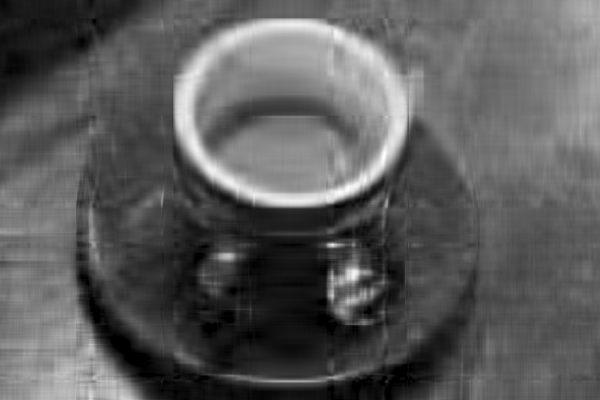}%
\includegraphics[width=0.33\linewidth]{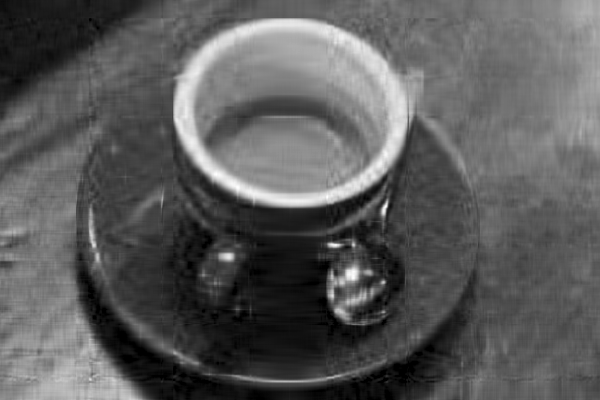}
\begin{minipage}{0.33\linewidth} \raggedright \footnotesize 
(C) A rank-5 approximation of the image matrix \newline with 22\% error and 5000 parameters
\end{minipage}%
\begin{minipage}{0.33\linewidth} \raggedright \footnotesize 
(D) A rank-15 approximation of the image matrix\newline with 15\% error and 15000 parameters
\end{minipage}%
\begin{minipage}{0.33\linewidth} \raggedright \footnotesize 
(E) A rank-25 approximation of the image matrix\newline with 12\% error and 25000 parameters
\end{minipage}%
\bigskip

\includegraphics[width=0.33\linewidth]{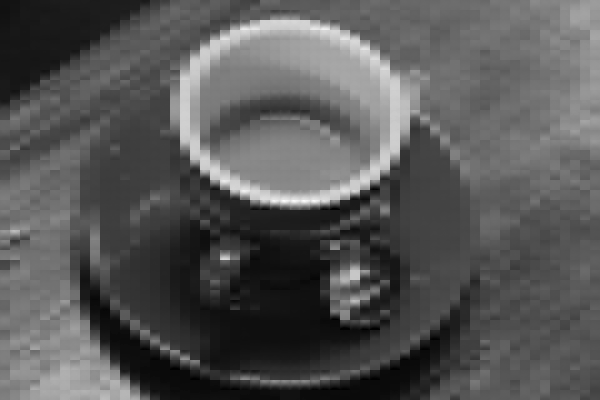}%
\includegraphics[width=0.33\linewidth]{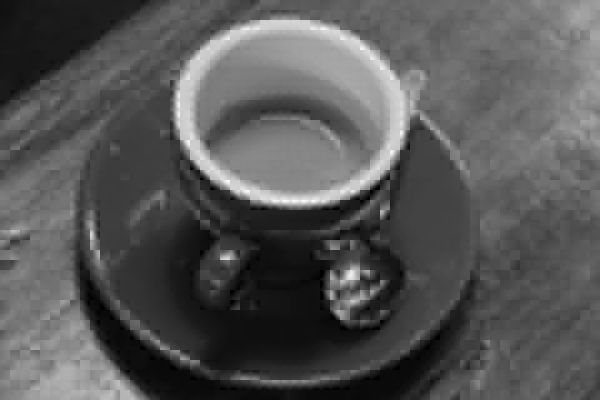}%
\includegraphics[width=0.33\linewidth]{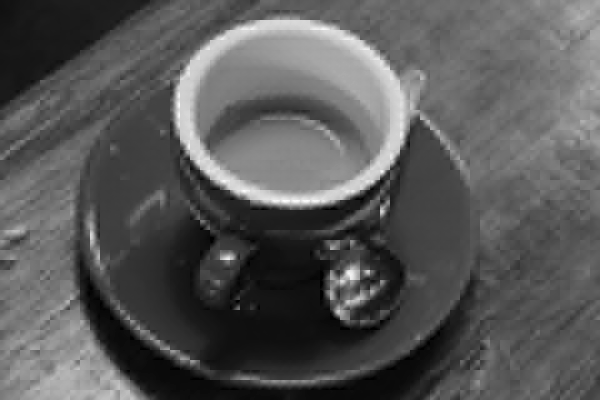}
\begin{minipage}{0.33\linewidth} \raggedright \footnotesize 
(F) A rank-2 approximation of the matrix of tiles\newline with 15\% error and 5000 parameters
\end{minipage}%
\begin{minipage}{0.33\linewidth} \raggedright \footnotesize 
(G) A rank-3 approximation of the matrix of tiles\newline with 13\% error and 7500 parameters
\end{minipage}%
\begin{minipage}{0.33\linewidth} \raggedright \footnotesize 
(H) A rank-4 approximation of the matrix of tiles\newline with 12\% error and 10000 parameters
\end{minipage}%

\caption{An example of how re-arranging the image data from a matrix (A) into a matrix of tiles (B) produces better approximations with fewer parameters. For instance (C) vs (F) shows a large reduction in error with the same parameters whereas (E) vs (H) shows a large reduction in parameters at the same error. Comparing (D) and (G) shows reduction in both error and parameters result from approximating the matrix in (B) compared with (A). } 
\label{fig:svd-tiles}	
\end{fullwidthfigure}

\clearpage

\section{A case study in temporal data}
The result that \emph{better} approximations of data are possible with reshaping the matrix also exists for multiple samples of temporal data. In this case, we consider a matrix of daily COVID-19 positivity rates from each of the 50 US states for 150 days. This can be simply represented as a 50-by-150 matrix $\mX_1$, where each state is a row. The values within the row are consecutively ordered by the date of their measurement. Consider now, if we divide the days up into three groups of 50 consecutive days, then we have $\mX_1 = \bmat{ \mG_1 & \mG_2 & \mG_3 }$ where each matrix $\mG_i$ is a group of 50 consecutive days.  Then we can reorganize into \[ \mX_2 = \bmat{ \mG_1 \\ \mG_2 \\ \mG_3 }. \] This gives a 150-by-50 matrix. If we compute the rank-2 SVD approximation of $\mX_2$, it is quantatively and qualitatively better than the rank-2 SVD approximation of $\mX_1$ as shown in Figure~\ref{fig:covid-cases}.  In this case, doing a rank-2 approximation of the reorganized data corresponds to a piecewise-linear approximation of the data in three distinct time regimes. This allows the method to better approximate changes in the trajectories. 

\begin{fullwidthfigure}
	\includegraphics[width=0.33\linewidth]{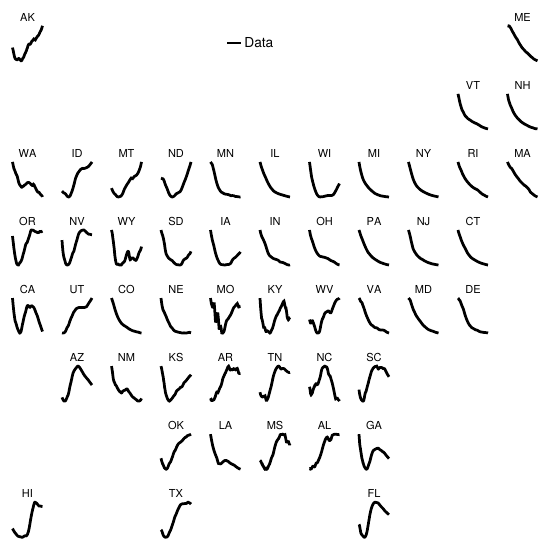}%
	\includegraphics[width=0.66\linewidth]{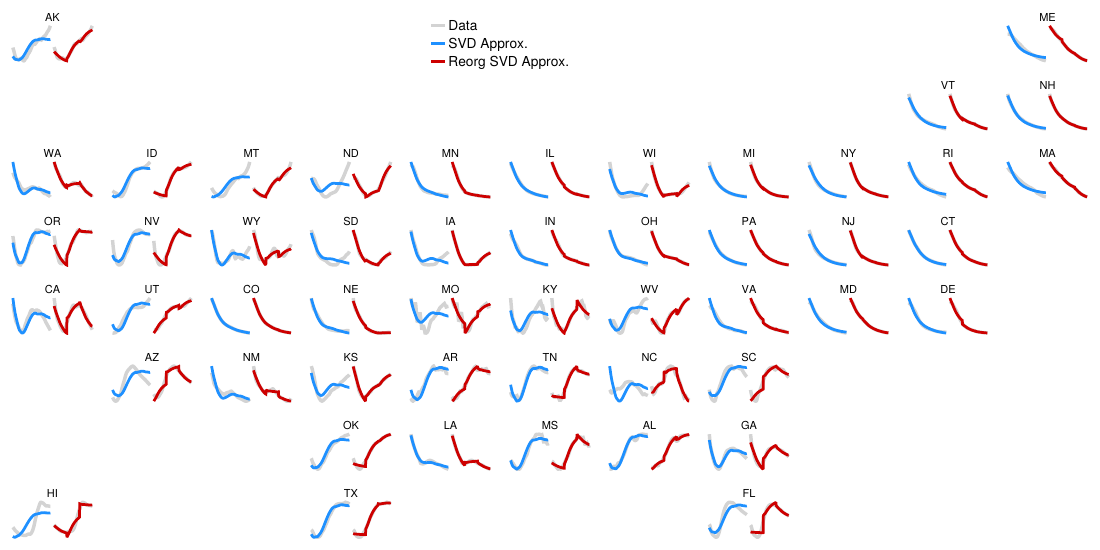}
\begin{minipage}{0.33\linewidth}\raggedright\footnotesize
(A) 150 days of data for 50 states (7500 parameters)
\end{minipage}%
\begin{minipage}{0.66\linewidth}\raggedright\footnotesize 
(B) In blue, a rank 2, 300-parameter SVD approximation with 6.7\% error. In red, a rank 2, 300-parameter reorganized SVD approximation with 2.7\% error. Ground truth is in grey behind each. 
\end{minipage}

\caption{ (A) Moving average COVID-19 positivity rates for the 50 US states for 150 days starting from May 17, 2020. (B) If we view this as a 50 $\times$ 150 matrix in terms of states-by-days and use the optimal rank-2 approximation, we get 6.7\% error with 300 parameters and the approximation shown in blue. The original data is shown as shaded. If we view this as a 150 $\times$ 50 matrix by the reorganization described in the text and use the optimal rank-2 approximation, we get 2.7\% error with the same number of parameters (300). Moreover, the approximation is qualitatively better -- consider Arizona (AZ), Hawaii (HI), Kentucky (KY), Montana (MT), North Dakota (ND), North Carolina (NC) }		
\label{fig:covid-cases}	
\end{fullwidthfigure}

\paragraph{Details}
The data on COVID-19 positivity rates is from \url{https://api.covidtracking.com/v1/states/daily.csv}. Then we divided the total number of positive results by the total number of tests to get a positivity rate. We then formed a 7 day moving average of measurements starting from May 17, 2020 until October 13, 2022 (150 days) for each of the 50 US states for 150 days. (The 7-day moving averages on May 17 reflects May 11-May 17.) 
The plot is normalized to show the maximum dynamic range for each state. 

%


%
%

\section{A theoretical bound on inverses of some tridiagonal matrices}
The results thus far have been empirical. We show a scenario where the best reorganized approximation \emph{gets better} as the matrix grows in size. 

\paragraph{A simple case: the identity matrix}
Consider the identity matrix. Let $\mI^{(1)}$ be any of the best rank 1 approximations of the identity matrix results. Then we have a Frobenius norm difference of $\sum_{ij} |I_{ij} - I_{ij}^{(1)}|^2 = n-1$. In contrast, if we reorganize $\mI$ into a matrix $\mB$ such that the diagonal of $\mI$ becomes the first column of $\mB$, then $\mB$ is exactly a rank 1 matrix. Consequently, we can find a difference as large as we like between a reorganized approximation and the original approximation. Admittedly, this case is probably too simple to be useful. 

\paragraph{A more involved case: strong diagonals} The idea for a more involved case is to show that the essence of the result for the identity matrix would hold for the inverse of a class of tridiagonal matrices. These inverses of diagonally dominant tridiagonal matrices will have a strong diagonal term and decaying off diagonals, a common pattern in many more complicated matrices from applications as well~\cite{Benzi-2013-decay}. This scenario was constructed such that the values are straightforward to compute and bound. We did not attempt to produce the tightest bounds.  

In this proof, we will use both the squared Frobenius norm: $\normof[F]{\mA} = \sqrt{\sum_{ij} A_{ij}^2}$ as well as the spectral norm $\normof{\mA}$, which is the largest singular value of $\mA$.  We also use $\rho(\mA)$ to denote the spectral radius of $\mA$, which is the magnitude of the largest eigenvalue. 

\begin{theorem} \label{thm:main} 
Let \[ \mX^{-1} = \bmat{ \gamma & \beta\gamma \\ \alpha\gamma & \gamma(1+\alpha \beta) & \beta\gamma 
\\ & \alpha\gamma & \gamma(1+\alpha \beta) & \beta\gamma \\ 
& & \ddots & \ddots & \ddots \\
& & & \alpha\gamma & \gamma(1+\alpha \beta) } \]  	
where $|\alpha| < 1$, $|\beta|< 1$, and $\gamma > 0$. Let the matrix size be $n \times n$. 
Then let $\mX^{(1)}$ be the best rank-1 approximation of $\mX$, we have $\sum_{ij} (X_{ij} - X^{(1)}_{ij})^2 \ge (\sum_{ij} X_{ij}^2) - \omega_X^2$ for a constant $\omega_X$ independent of $n$. If we organize $\mX$ into a matrix $\mB$ where the first column of $\mB$ corresponds to the diagonal of $\mX$,  then if $\mB^{(1)}$ is the best rank-1 approximation of $\mB$, we have $\sum_{ij} (B_{ij} - B^{(1)}_{ij})^2 \le (\sum_{ij} X_{ij}^2) - \omega_1 n - \omega_2(n)$, where $\omega_1$ is a constant and $\omega_2(n)$ converges geometrically fast to a constant. This improves with $n$ and dominates the other approximation for sufficiently large $n$.
\end{theorem}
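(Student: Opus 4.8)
The plan is to exploit a bidiagonal factorization of $\mX^{-1}$ to obtain a closed form for the entries of $\mX$, and then to treat the two approximation errors separately through the energy $\normof[F]{\mX}^2$. First I would observe that $\mX^{-1} = \gamma\,\mat{L}\,\mat{U}$, where $\mat{L}$ is unit lower bidiagonal with subdiagonal entries $\alpha$ and $\mat{U}$ is unit upper bidiagonal with superdiagonal entries $\beta$; expanding $\mat{L}\mat{U}$ reproduces the stated tridiagonal matrix exactly. Hence $\mX = \tfrac1\gamma\,\mat{U}^{-1}\mat{L}^{-1}$, and since the inverses of unit bidiagonal matrices are triangular Toeplitz with entries $(-\alpha)^{i-j}$ and $(-\beta)^{j-i}$, summing a geometric series gives $X_{ij} = \tfrac1\gamma(-\alpha)^{i-j}\frac{1-(\alpha\beta)^{n-i+1}}{1-\alpha\beta}$ for $i\ge j$ (and symmetrically with $\beta$ and $n-j+1$ for $i\le j$). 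This exhibits exactly the ``strong diagonal, geometrically decaying off-diagonal'' structure: writing $c := \tfrac1{\gamma(1-\alpha\beta)}>0$, every diagonal entry equals $c$ up to a correction $c\,(\alpha\beta)^{n-i+1}$ that is negligible away from the corner.

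For the lower bound on $\mX$, the Eckart--Young theorem gives $\normof[F]{\mX-\mX^{(1)}}^2 = \normof[F]{\mX}^2 - \sigma_1^2$, so it suffices to bound the top singular value $\sigma_1=\normof{\mX}$ by a constant independent of $n$. I would use submultiplicativity, $\normof{\mX}\le\tfrac1\gamma\normof{\mat{U}^{-1}}\,\normof{\mat{L}^{-1}}$, together with $\normof{\mA}\le\sqrt{\normof[1]{\mA}\,\normof[\infty]{\mA}}$. The absolute row and column sums of $\mat{L}^{-1}$ and $\mat{U}^{-1}$ are bounded by the convergent geometric series $\tfrac1{1-|\alpha|}$ and $\tfrac1{1-|\beta|}$, so $\sigma_1^2\le\omega_X^2$ with $\omega_X=\tfrac1{\gamma(1-|\alpha|)(1-|\beta|)}$, a constant. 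This delivers $\normof[F]{\mX-\mX^{(1)}}^2\ge\normof[F]{\mX}^2-\omega_X^2$.

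For the upper bound on $\mB$, I would avoid analyzing the whole reorganization and instead bound the best rank-1 error from above by a single convenient candidate. Reorganization preserves the Frobenius norm, so $\normof[F]{\mB}^2=\normof[F]{\mX}^2$, and projecting each column onto the constant direction $\mathbf{1}$ yields the valid rank-1 matrix $\mB^{(1)}:=\tfrac1n\mathbf{1}\mathbf{1}^T\mB$ whose captured energy is $\sum_j n\bar B_j^2$, the sum of squared column means. Consequently the best rank-1 error is at most $\normof[F]{\mX}^2-\sum_j n\bar B_j^2\le\normof[F]{\mX}^2-n\bar B_1^2$. Because the first column of $\mB$ is exactly the diagonal of $\mX$, we have $\bar B_1=\tfrac1n\sum_i X_{ii}=c\big(1-\tfrac1n S_n\big)$ with $S_n=\sum_{l=1}^n(\alpha\beta)^l=\frac{\alpha\beta(1-(\alpha\beta)^n)}{1-\alpha\beta}$, so $n\bar B_1^2=c^2 n-2c^2 S_n+O(1/n)$. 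Taking $\omega_1=c^2$ and $\omega_2(n)=-2c^2S_n+O(1/n)$, which converges geometrically to a constant since $(\alpha\beta)^n\to0$, gives the claimed bound; retaining the remaining columns of $\mB^{(1)}$ only decreases the error further.

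Comparing the two estimates, $\mX$'s error is at least $\normof[F]{\mX}^2-\omega_X^2$ with $\omega_X^2$ fixed, while $\mB$'s error is at most $\normof[F]{\mX}^2-\omega_1 n-\omega_2(n)$ with $\omega_1=c^2>0$; hence $\omega_1 n$ eventually exceeds $\omega_X^2+|\omega_2(n)|$ and the reorganized approximation strictly dominates for all sufficiently large $n$, with the gap growing linearly. I expect the main obstacle to be the spectral-norm step: one must verify that the bound on $\sigma_1$ is genuinely $n$-independent, which is precisely where the hypotheses $|\alpha|,|\beta|<1$ enter through convergence of the geometric row and column sums, and one must apply Eckart--Young in the correct (Frobenius) norm. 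The bookkeeping for the other diagonals of $\mB$ — in particular how shorter or wrapped diagonals are padded into length-$n$ columns — is the other fiddly point, but as shown it can be sidestepped entirely for the stated bound by keeping only the main-diagonal column, whose length is always exactly $n$.
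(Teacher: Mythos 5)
Your proposal is correct, and it follows the same overall architecture as the paper's proof: the identical bidiagonal factorization $\mX^{-1} = \mL\mU$ (the paper's Lemma~\ref{lem:lu}), the same closed form for the entries of $\mX$ (Lemma~\ref{lem:inverse}), Eckart--Young plus an $n$-independent bound on $\sigma_1$ for the lower bound, and a hand-picked rank-1 candidate for the upper bound on $\mB$. The two key bounds, however, are executed differently. For the spectral norm, the paper applies a Gershgorin disk argument to ${\mL^{-T}}\mL^{-1}$ and ${\mU^{-T}}\mU^{-1}$ (Lemma~\ref{lem:spectral}), whereas you use submultiplicativity together with $\normof{\cdot}^2 \le \normof[1]{\cdot}\,\normof[\infty]{\cdot}$ on the triangular Toeplitz inverses; your route is more elementary and yields the cleaner constant $\omega_X = 1/(\gamma(1-|\alpha|)(1-|\beta|))$, at the cost of being slightly less sharp. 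For the reorganized matrix, the paper's candidate keeps the first column of $\mB$ exactly and zeroes everything else, capturing the full diagonal energy $\sum_i X_{ii}^2 = T_n/\gamma^2$, whose closed form (Lemma~\ref{lem:Tn}) splits exactly as $\omega_1 n + \omega_2(n)$ with $\omega_2(n)$ converging geometrically. Your candidate instead projects the columns onto $\mathbf{1}$ and keeps only $n\bar{B}_1^2$ from the first column, which by Cauchy--Schwarz is weaker than $\sum_i X_{ii}^2$; the consequence is the one small slip in your write-up: your captured energy is $c^2 n - 2c^2 S_n' + c^2 (S_n')^2/n$ with $S_n' = \sum_{l=1}^n (\alpha\beta)^l$, so the $\omega_2(n)$ you define contains an $O(1/n)$ piece and therefore converges to its limit only at rate $O(1/n)$, not geometrically as the theorem asserts. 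This is cosmetic rather than a gap: the term $c^2(S_n')^2/n$ is nonnegative, so you may simply discard it from the captured energy and take $\omega_2(n) = -2c^2 S_n'$, which does converge geometrically; the resulting bound is marginally weaker than the paper's but entirely sufficient for the theorem's conclusion that the reorganized error dominates for large $n$.
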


The proof of this result is fairly straightforward. 
\begin{compactitem}
\item Lemma~\ref{lem:lu} shows a detail on how we constructed the matrix $\mX^{-1}$ as the product of two bidiagonal matrices to create an easy-to-analyze problem.
\item Lemma~\ref{lem:spectral} gives a characterization of the spectral norm of $\mX$, which is a bound on the largest singular value. This shows that the best rank-1 approximation cannot give more than a constant change to the overall sum of squared elements. This uses a Gershgorin disk bound. 
\item Lemma~\ref{lem:inverse} works through the exact form of all the entries of $\mX$. 
\item Lemma~\ref{lem:Tn} works through the approximation implied by just approximating the main diagonal.  	
\end{compactitem}

\begin{lemma} \label{lem:lu} 
Let $\mL$ be the lower bidiagonal matrix with $1$ on the diagonal and $\alpha$ on the sub-diagonal. Let $\mU$ be the upper bidiagonal matrix with $\gamma > 0$ on the diagonal and $\beta \gamma$ on the super-diagonal. Let $|\alpha| < 1$ and $|\beta|< 1$. Let $n$ be the dimension of the matrix. Then $\mA = \mX^{-1} = \mL \mU$.	
\end{lemma}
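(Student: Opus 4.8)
The plan is to prove this by direct verification: form the matrix product $\mL\mU$ and confirm entrywise that it reproduces the displayed tridiagonal matrix $\mX^{-1}$. Since both factors are bidiagonal, this is a short and completely mechanical calculation — each entry of the product is a sum of at most two terms — so the real work is just careful bookkeeping rather than any genuine difficulty. The lemma is essentially the statement that the given tridiagonal matrix admits this particular $\mL\mU$ factorization, and naming it $\mX^{-1}$ simply records that we intend to work with its inverse later.

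Concretely, I would write $L_{ik}$ for the entries of $\mL$ (equal to $1$ when $k=i$, to $\alpha$ when $k=i-1$, and $0$ otherwise) and $U_{kj}$ for the entries of $\mU$ (equal to $\gamma$ when $j=k$, to $\beta\gamma$ when $j=k+1$, and $0$ otherwise). Because only two of the $L_{ik}$ are nonzero in each row, the product collapses to $(\mL\mU)_{ij} = \sum_k L_{ik} U_{kj} = U_{ij} + \alpha U_{i-1,j}$ for $2 \le i \le n$, with $(\mL\mU)_{1j} = U_{1j}$ in the first row. Evaluating this on the three occupied diagonals then gives, on the main diagonal ($j=i$), the value $\gamma + \alpha\beta\gamma = \gamma(1+\alpha\beta)$; on the superdiagonal ($j=i+1$), the value $\beta\gamma$; on the subdiagonal ($j=i-1$), the value $\alpha\gamma$; and $0$ at every remaining position. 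These match the interior pattern of the displayed matrix exactly.

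The only point requiring care — and it is the closest thing to an ``obstacle'' here — is the treatment of the boundary entries, where the formula $U_{ij} + \alpha U_{i-1,j}$ loses one of its two terms. At the top-left corner the absent $i-1=0$ row forces $(\mL\mU)_{11} = U_{11} = \gamma$, which is why that single entry reads $\gamma$ rather than $\gamma(1+\alpha\beta)$; at the bottom-right corner, by contrast, both terms survive and one confirms $(\mL\mU)_{nn} = \gamma(1+\alpha\beta)$, consistent with the displayed last row. I would verify these corners explicitly to close the argument. Finally, I would note in passing that the factorization guarantees invertibility: $\det \mL = 1$ and $\det \mU = \gamma^n \neq 0$ since $\gamma > 0$, so $\mL\mU$ is nonsingular and the matrix $\mX = (\mL\mU)^{-1}$ referenced throughout is well defined.
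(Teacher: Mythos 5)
Your proposal is correct and follows the same route as the paper, which simply declares the lemma a ``direct computation'' after displaying $\mL$ and $\mU$; you actually carry out the entrywise multiplication, check the boundary entries, and note nonsingularity via $\det\mL = 1$ and $\det\mU = \gamma^n$, all of which is sound and indeed more complete than the paper's own one-line proof.
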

\begin{proof}
This is direct computation. We have 
\[ \mL = \bmat{1 \\ \alpha & 1 \\ & \alpha & 1 \\ & & \ddots & \ddots \\ & & & \alpha & 1 } 
\qquad	\text{ and } \qquad 
\mU = \gamma \bmat{ 1 & \beta \\ & 1 & \beta \\ & & \ddots & \ddots \\ & & & 1 & \beta \\ & & & & 1 }. 
\]
\end{proof}

\begin{lemma} \label{lem:spectral}
In the same scenario as Theorem~\ref{thm:main} and Lemma~\ref{lem:lu}, consider the matrix $\mX = \mA^{-1} = \mU^{-1} \mL^{-1}$. Then the spectral operator norm, i.e. the largest singular value, satisfies the bound  
\[ \normof{\mX} = \normof{\mA^{-1}} \le \sqrt{\frac{1}{1-\alpha^2}(2/(1-|\alpha |) - 1)} \sqrt{\frac{(1/\gamma)^2}{1-\beta^2}(2/(1-| \beta |)-1)}. \] 
\end{lemma}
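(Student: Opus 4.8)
The plan is to exploit the factorization $\mX = \mA^{-1} = \mU^{-1}\mL^{-1}$ from Lemma~\ref{lem:lu} together with submultiplicativity of the spectral norm, so that $\normof{\mX} \le \normof{\mU^{-1}}\normof{\mL^{-1}}$. The two factors in the claimed bound correspond exactly to these two pieces: I expect to establish $\normof{\mL^{-1}} \le \sqrt{\tfrac{1}{1-\alpha^2}(2/(1-|\alpha|)-1)}$ and $\normof{\mU^{-1}} \le \sqrt{\tfrac{(1/\gamma)^2}{1-\beta^2}(2/(1-|\beta|)-1)}$, after which multiplying gives the stated inequality. Since $\mU = \gamma\widetilde{\mU}$ with $\widetilde{\mU}$ unit upper bidiagonal, the scalar $1/\gamma$ pulls out cleanly and the role of $\beta$ mirrors that of $\alpha$, so it suffices to treat $\mL^{-1}$ in detail and then transcribe the argument.

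First I would write the bidiagonal inverses explicitly. Since $\mL = \mI + \alpha \mat{N}$ with $\mat{N}$ the subdiagonal shift, the Neumann series terminates and $(\mL^{-1})_{ij} = (-\alpha)^{i-j}$ for $i \ge j$ and $0$ otherwise; analogously $\widetilde{\mU}^{-1}$ has entries $(-\beta)^{j-i}$ for $j \ge i$. Rather than estimate singular values directly, I would pass to the symmetric positive semidefinite Gram matrix $\mat{M} = (\mL^{-1})^T \mL^{-1}$ and use $\normof{\mL^{-1}}^2 = \rho(\mat{M})$. For a symmetric matrix, Gershgorin's theorem places every eigenvalue in the union of disks centered at the diagonal entries with radius the off-diagonal absolute row sum, so $\rho(\mat{M}) \le \normof[\infty]{\mat{M}}$, the maximum absolute row sum. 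This is the step flagged in the outline as ``a Gershgorin disk bound,'' and it is what lets me avoid an exact singular-value computation.

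The computational core is then to bound the entries of $\mat{M}$ and sum over a row. A direct calculation gives $M_{ik} = (-\alpha)^{|i-k|}\sum_{j \ge \max(i,k)} \alpha^{2(j - \max(i,k))}$, and since the truncated geometric sum is at most $1/(1-\alpha^2)$, one gets $|M_{ik}| \le |\alpha|^{|i-k|}/(1-\alpha^2)$. Summing over $k$ and bounding the two-sided geometric series by its infinite value, $\sum_{k} |\alpha|^{|i-k|} \le 2/(1-|\alpha|) - 1$, yields $\normof[\infty]{\mat{M}} \le \tfrac{1}{1-\alpha^2}(2/(1-|\alpha|)-1)$, which is precisely the square of the target factor. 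Repeating verbatim for $\widetilde{\mU}^{-1}$ with $\beta$ in place of $\alpha$ and reinstating the scalar $1/\gamma$ completes the product.

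The steps are all elementary, so there is no single hard obstacle; the point that requires care is uniformity in $n$. Both the geometric sum defining $M_{ik}$ and the two-sided row sum must be bounded by their $n \to \infty$ limits, and one must check that these truncations only ever \emph{decrease} the sums, so the resulting constants are genuinely independent of $n$ --- which is the feature the remainder of Theorem~\ref{thm:main} relies on. A secondary bookkeeping point is to verify that $\tfrac{1}{1-\alpha^2}(2/(1-|\alpha|)-1)$ collapses to $1/(1-|\alpha|)^2$, confirming that the bound is in fact the clean quantity $\normof{\mX} \le 1/\bigl(\gamma(1-|\alpha|)(1-|\beta|)\bigr)$.
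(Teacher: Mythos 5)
Your proposal is correct and follows essentially the same route as the paper: factor $\mX = \mU^{-1}\mL^{-1}$, use submultiplicativity, write the bidiagonal inverses via terminating Neumann series, pass to the Gram matrices, and apply a Gershgorin row-sum bound in which the truncated geometric sums ($C_k$, $D_k$ in the paper's notation) are replaced by their infinite limits. The only addition beyond the paper is your closing simplification of the bound to $1/\bigl(\gamma(1-|\alpha|)(1-|\beta|)\bigr)$, which is a valid and tidy observation but not needed for the result.
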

\begin{proof}
	The proof follows by repeated application of the geometric series or Neumann series, combined with a Gershgorin disk result on the extremal eigenvalue. Recall that 
\[ 1+\alpha+\alpha^2+\ldots = 1/(1-\alpha) \text{ when } |\alpha| < 1\text{ and } \mI + \mA + \mA^2 + \ldots = (\mI - \mA)^{-1} \text{ when } \rho(\mA) < 1. \] 
First note that
\[ \mL^{-1} = \bmat{1 \\ -\alpha & 1 \\ (-\alpha)^2 & -\alpha & 1 \\  \vdots & \ddots & \ddots & \ddots \\
(-\alpha)^{n-1} & \cdots & (-\alpha)^2 & -\alpha & 1 }, \] 
which follows by using the matrix geometric. 
Likewise 
\[ \mU^{-1} = (1/\gamma) 
\bmat{
1 & -\beta & (-\beta)^2 & \cdots & (-\beta)^{n-1} \\
& \ddots & \ddots & \ddots & \vdots \\
& & 1 & -\beta & (-\beta)^2 \\
& & & 1 & -\beta \\
& & & & 1 } \]

Now, we use the simple upper-bound 
$\normof{\mA^{-1}} \le \normof{\mU^{-1}} \normof{\mL^{-1}}$. 
In order to estimate both $\normof{\mL^{-1}}$ and $\normof{\mU^{-1}}$, we use a Gersgorin disk bound on the eigenvalues of ${\mL^{-1}}^T {\mL^{-1}}$  and ${\mU^{-1}}^T  {\mU^{-1}}$.

Let $C_k = \frac{1-\alpha^{2k}}{1-\alpha^2}$, then 
\[ {\mL^{-T}} {\mL^{-1}} = \bmat{
C_n & -\alpha C_{n-1} & (-\alpha)^2 C_{n-2} & \cdots & (-\alpha)^{n-1} C_{1} \\
-\alpha C_{n-1} & C_{n-1} & -\alpha C_{n-2} & \cdots & (-\alpha)^{n-2}C_1\\
\vdots & \ddots & \ddots & \ddots & \vdots \\
(-\alpha)^{n-1} C_1 & \cdots & \cdots & \alpha C_{1} & C_1 \\ 
}. \] 

Likewise let $D_k = \frac{1-\beta^{2k}}{1-\beta^2}$, then 
\[ {\mU^{-T}} \mU^{-1} = (1/\gamma)^{2} \bmat{
D_1 & (-\beta) D_1 & (-\beta)^2 D_1 & \cdots & (-\beta)^{n-1} D_1 \\
(-\beta) D_1 & D_2 & (-\beta) D_2 & \cdots & (-\beta)^{n-2} D_2 \\
(-\beta)^2 D_1 & \vdots & \ddots & \ddots & \vdots \\
\vdots & \vdots & \ddots & D_{n-1}  & (-\beta) D_{n-1}\\ 
(-\beta)^{n-1} D_1 & \cdots & \cdots & (-\beta) D_{n-1} & D_n
}. \]

\[ [\mL^{-T} \mL^{-1}]_{ij} = (-\alpha)^{|i-j|}C_{n-\max(i,j)+1}  \] 
\[ [\mU^{-T} \mU^{-1}]_{ij} = (1/\gamma)^2 (-\beta)^{|i-j|}D_{\min(i,j)}  \] 

To apply the Gersgorin disk result on these matrices to get an upper bound on the spectral operator norms of $\mL^{-1}$ and $\mU^{-1}$, we simply note that we can replace $C_k$ and $D_k$ by $C_{\infty}$ and $D_{\infty}$. (Note that both terms are positive.) Then in a row in the middle, we have (after applying absolute value signs) the upper bound: 
\[ \bmat{ 
& & & & \text{diagonal} \\
& & & & \downarrow  \\
\cdots &  |\beta|^{3} D_{\infty} & |\beta|^{2} D_{\infty} & |\beta|^{1} D_{\infty} &  D_{\infty} & |\beta|^{1} D_{\infty} &  |\beta|^{2} D_{\infty} &  |\beta|^{3} D_{\infty} & \cdots \\
} \] 
If we simply extended this to an infinite size matrix, we have the upper bound from Gershgorin disks as 
\[ \rho(\mL^{-T} \mL^{-1}) \le C_{\infty} (2/(1-|\alpha|) - 1). \] 
\[ \rho(\mU^{-T} \mU^{-1}) \le (1/\gamma)^2 D_{\infty} (2/(1-|\beta|) - 1). \] 
The lemma follows because $C_{\infty} = 1/(1-\alpha^2)$ and $D_{\infty} = 1/(1-\beta^2)$ and both $\normof{\mL^{-1}} \le \sqrt{\rho(\mL^{-T} \mL^{-1})}$, $\normof{\mU^{-1}} \le \sqrt{\rho(\mU^{-T} \mU^{-1})}$.
\end{proof}

This lemma gets us half the way to the result. We know that the best rank 1 approximation of $\mX$ is a constant that is independent of $n$. Hence, $\normof[F]{\mX - \mX^{(1)}}^2 \ge \normof[F]{\mX}^2 - \omega_X^2$ for some constant $\omega_X$.\footnote{The constant $\omega_X$ is equal to the value of $\normof{\mX}$, which, as shown in Lemma~\ref{lem:spectral}, does not scale with $n$. This is also the largest singular value of $\mX$. This result follows because the Frobenius norm is unitarily invariant, so $\normof[F]{\mX - \mX^{(1)}}^2 = \normof[F]{\mSigma - \mSigma^{(1)}}^2$, where $\mSigma$ is the diagonal matrix of singular values and $\mSigma^{(1)}$ is the first rank-1 factor in the SVD. Thus, we have the bound $\normof[F]{\mX}^2 - \omega_X^2$.} 

To determine that the diagonal alone is a better approximation, we need to make sure the elements of the diagonal do not become too small. The next lemma gives that information. 

\begin{lemma} \label{lem:inverse}
Let $\mA = \mL \mU$ with the same conditions as Theorem~\ref{thm:main}, Lemma~\ref{lem:lu}. Then 
\[ \mX = \mA^{-1} = \frac{1}{\gamma} \bmat{
S_{n} & (-\beta) S_{n-1} & (-\beta)^2 S_{n-2} & \cdots & (-\beta)^{n-1} S_{1} \\
(-\alpha) S_{n-1} & S_{n-1} & (-\beta) S_{n-2} & \cdots & (-\beta)^{n-2} S_{1} \\
(-\alpha)^2 S_{n-2} & (-\alpha) S_{n-2} & S_{n-2} & \cdots & (-\beta)^{n-3} S_{1} \\
\vdots & \vdots & \ddots & \ddots  & \vdots \\ 
(-\alpha)^{n-2} S_2 & \cdots & \cdots &  S_2  & (-\beta) S_1 \\
(-\alpha)^{n-1} S_1 & \cdots & \cdots & (-\alpha) S_1  & S_1 \\
}
\] 
where $S_i = \sum_{j=0}^{i-1} (\alpha \beta)^i$. 
\end{lemma}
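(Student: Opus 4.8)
The plan is to compute $\mX = \mA^{-1} = \mU^{-1}\mL^{-1}$ directly as a matrix product, since Lemma~\ref{lem:spectral} already records closed forms for the two triangular factors. Reading off those displays, the entries are $[\mL^{-1}]_{kj} = (-\alpha)^{k-j}$ when $k \ge j$ and zero otherwise, while $[\mU^{-1}]_{ik} = (1/\gamma)(-\beta)^{k-i}$ when $k \ge i$ and zero otherwise. So the first step is to write the general entry of the product as
\[ X_{ij} = \sum_{k=1}^n [\mU^{-1}]_{ik}[\mL^{-1}]_{kj} = \frac{1}{\gamma}\sum_{k} (-\beta)^{k-i}(-\alpha)^{k-j}, \]
where the surviving terms in the sum are exactly those with $k \ge i$ and $k \ge j$, i.e.\ $k \ge \max(i,j)$.

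Next I would factor the summand to expose a geometric series. Writing $m = \max(i,j)$ and substituting $k = m + t$ with $t$ running from $0$ to $n-m$ gives
\[ X_{ij} = \frac{1}{\gamma}(-\beta)^{m-i}(-\alpha)^{m-j}\sum_{t=0}^{n-m}(\alpha\beta)^t = \frac{1}{\gamma}(-\beta)^{m-i}(-\alpha)^{m-j}\,S_{n-m+1}, \]
using the definition $S_\ell = \sum_{t=0}^{\ell-1}(\alpha\beta)^t$. This single formula already contains all the structure, and the remaining work is purely bookkeeping.

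Finally I would split into the two triangular regions to match the stated display. When $i \le j$ we have $m = j$, so $(-\alpha)^{m-j} = 1$ and $X_{ij} = (1/\gamma)(-\beta)^{j-i}S_{n-j+1}$, which reproduces the upper-triangular pattern (e.g.\ row $1$ gives $S_n, (-\beta)S_{n-1}, \ldots, (-\beta)^{n-1}S_1$). When $i \ge j$ we have $m = i$, so $(-\beta)^{m-i} = 1$ and $X_{ij} = (1/\gamma)(-\alpha)^{i-j}S_{n-i+1}$, reproducing the lower-triangular pattern; the two formulas agree on the diagonal $i=j$, where both give $(1/\gamma)S_{n-i+1}$.

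There is no real analytic obstacle here, since the result is a direct computation, so the only thing to watch is index discipline. The subtle point is that the subscript of $S$ depends on $n - \max(i,j)$ rather than on $|i-j|$: the number of surviving terms in the inner sum shrinks as the larger index approaches $n$, which is precisely why the diagonal entries decay from $S_n$ in the top-left corner down to $S_1$ in the bottom-right corner. Keeping the $\max(i,j)$ truncation straight, and confirming that the off-diagonal powers of $-\alpha$ and $-\beta$ land in the correct triangles, is the entirety of the care required.
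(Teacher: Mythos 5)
Your proposal is correct and follows the same route as the paper, which simply states that the result ``follows from direct computation of elements of $(1/\gamma)\mU^{-1}\mL^{-1}$''; you have filled in exactly that computation, with the $\max(i,j)$ truncation handled properly. Note in passing that your version silently corrects a typo in the lemma statement: the definition should read $S_i = \sum_{j=0}^{i-1}(\alpha\beta)^{j}$, with exponent $j$ rather than $i$, which is the convention your geometric-series step uses.
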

\begin{proof}
This follows from direct computation of elements of $(1/\gamma) \mU^{-1} \mL^{-1}	$. 
\end{proof}

\begin{lemma} \label{lem:Tn}
Let $\alpha, \beta, \gamma, \mX$, and $S_i$ be as in Theorem~\ref{thm:main}, Lemma~\ref{lem:inverse}.
Let $T_n = \sum_{i=1}^n 	S_i^2$ and let $\delta = \alpha \beta$. Then 
\[ T_n = \frac{1}{(1-\delta)^2}\left( n - \frac{2\delta(1-\delta^n)}{1-\delta} + \frac{\delta^2(1-\delta^{2n})}{1-\delta^2} \right) \] 
\end{lemma}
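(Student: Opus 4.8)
The plan is to reduce the entire computation to the closed form of a finite geometric series. The first step is to observe that, since $|\delta| = |\alpha\beta| < 1$ and in particular $\delta \neq 1$, the partial sum $S_i = \sum_{j=0}^{i-1}\delta^j$ has the closed form
\[ S_i = \frac{1-\delta^i}{1-\delta}. \]
Squaring this and expanding the numerator gives
\[ S_i^2 = \frac{1 - 2\delta^i + \delta^{2i}}{(1-\delta)^2}, \]
so that the constant prefactor $1/(1-\delta)^2$ can immediately be pulled out of the outer sum over $i$.

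The second step is to sum term-by-term. Writing $T_n = \sum_{i=1}^n S_i^2$ and splitting into the three pieces yields
\[ T_n = \frac{1}{(1-\delta)^2}\left( \sum_{i=1}^n 1 \;-\; 2\sum_{i=1}^n \delta^i \;+\; \sum_{i=1}^n \delta^{2i} \right). \]
The first sum is simply $n$. The remaining two are again finite geometric series; applying $\sum_{i=1}^n r^i = r(1-r^n)/(1-r)$ with $r=\delta$ and then with $r=\delta^2$ (both permissible because $|\delta|<1$ forces $\delta,\delta^2 \neq 1$) gives
\[ \sum_{i=1}^n \delta^i = \frac{\delta(1-\delta^n)}{1-\delta}, \qquad \sum_{i=1}^n \delta^{2i} = \frac{\delta^2(1-\delta^{2n})}{1-\delta^2}. \]
Substituting these back and collecting terms reproduces the claimed formula exactly.

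There is no genuine obstacle here: the lemma is a direct evaluation, and the only points requiring care are bookkeeping. One should confirm that every denominator $1-\delta$ and $1-\delta^2$ is nonzero, which is guaranteed by $|\alpha|<1$ and $|\beta|<1$ and hence $|\delta|<1$. One should also track the summation indices carefully, since the factor of $\delta$ (respectively $\delta^2$) in the numerators of the last two sums arises precisely because the outer summation starts at $i=1$ rather than $i=0$, whereas the inner sum defining $S_i$ starts at $j=0$. With those conventions fixed, the identity follows by straightforward algebra.
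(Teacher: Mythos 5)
Your proof is correct and follows essentially the same route as the paper: closed form $S_i = (1-\delta^i)/(1-\delta)$, expansion of $S_i^2$, and two truncated geometric series for $\sum_{i=1}^n \delta^i$ and $\sum_{i=1}^n \delta^{2i}$. Your write-up is simply a more detailed version of the paper's argument, including the useful check that $|\delta| < 1$ keeps all denominators nonzero.
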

\begin{proof}
Note that relationship between the diagonals of $\mX$ and scaled versions of $T_n$. 
The structure of $T_n$ follows from using a truncated geometric series to compute $S_i = (1-\delta^i)/(1-\delta)$, then two truncated geometric series to handle $T_n = \sum_{i=1}^n S_i^2 = \sum_{i=1}^n (1 - 2 \delta^i + \delta^{2i})/(1-\delta)^2$. 
This rest follows from organizing the matrix $\mX$ into diagonals and using the scaled sum. 
\end{proof}

Consider the implication of this result for Theorem~\ref{thm:main}. If we organize $\mX$ so that the diagonal becomes the first column, then the best rank-1 approximation has to be \emph{at least as good} as that single column. Hence,
\[ \normof[F]{\mB - \mB^{(1)}}^2 \le \normof[F]{\mB}^2 - \frac{1}{\gamma^2} T_n. \] 
But we have that $T_n$ is almost linear function in $n$, that is, $\frac{1}{\gamma^2} T_n = \omega_1 n + \omega_2(n)$. In this case, $\omega_2(n)$ converges to a constant geometrically fast.  This completes the final piece of Theorem~\ref{thm:main}.



\section{Related work}



As mentioned previously, the observation that we can improve the approximation of a matrix by reorganizing the data in the SVD is not new. The point of this manuscript is that the results from these two natural examples, in images and in time-series data, show the power of just how much an improved data organization can improve the approximation -- in particular on the qualitative dimension.

We'll survey a few related ideas in no particular order and conclude with a relationship back to the Kronecker product SVD and other observations about related structure. 

\subsection{Natural organizations}
The association of an image with a matrix of data is widely understood to be \emph{natural}. However, this need not be the case. 
A related point was made in the science fiction novel \emph{Children of Time}~\cite{Tchaikovsky-2015-ChildrenOfTime} whereby a set of sentient spiders sought to communicate with a human-like intelligence. Spiders weave a two-dimensional web in a circular fashion from inwards to outwards (\url{https://www.youtube.com/watch?v=zNtSAQHNONo}) -- suggesting a completely different ``natural'' organization of pixel data. So a minor-plot point is getting the human-like intelligence to understand the radial image encoding. To the spiders, the square grid organization is not natural. 

\subsection{Better than SVD approximation}
Although not comprehensive, we note that \citet{Sui-2013-low-rank} designed a low-parameter matrix approximation that first clusters data and then computes an SVD within each cluster. This results in a higher-rank but lower-parameter approximation of a matrix than is possible with the SVD. 

\subsection{The hidden Kronecker product SVD}
In fact, what we are doing with the tiled approximation is exactly a Kronecker product SVD~\cite{VanLoan-1993-kronecker}.  A Kronecker-product SVD is a means of forming an approximation\footnote{There is ambiguity in this specific representation because we can scale each term and adjust the other. However, this has no impact on our discussion, so we try and keep things simple.}  
\[ \mA = \sum\nolimits_{j} \mB_j \kron \mC_j \] 
where the dimensions of $\mB_j$ and $\mC_j$ are the same for all $j$ and the overall dimensions of the problem match. To see that this is equivalent with our matrix of tiles approach, let's consider a specific instance that is representative of the general case. 

We need two small bits of notation. First $\tvec(\mA)$ is a vectorized representation of a matrix as a single column.\footnote{In Julia, this is simply the operation \texttt{vec} and in Matlab, it was \texttt{[:]}.} The second piece of notation is that matrices and vectors will be written as bold but elements of matrices are non-bold. 

Consider a $3 \times 4$ matrix of 12 tiles, where each tile is $k \times k$, \footnote{The tile size need not be square.} 
\[ \mA = \bmat{ \mT_1 & \mT_4 & \mT_7 & \mT_{10} \\
                \mT_2 & \mT_5 & \mT_8 & \mT_{11} \\
                \mT_3 & \mT_6 & \mT_9 & \mT_{12} } \] 
The overall matrix dimensions are $3k \times 4k$. 
In the SVD of tiles we describe above, we reshape this into a matrix $\mX$ where 
\[ \mX = \bmat{ \tvec(\mT_1) & \tvec(\mT_2) & \cdots & \tvec(\mT_{12})} . \] 
Then an SVD of $\mX$ is given by 
\[ \mX = \mU \mSigma \mV^T. \]
Let $\vu_j$ be the $j$th column of $\mU$, this gives a representation of each tile
\[ \tvec(\mT_i) = \mU \mSigma \mV^T \ve_i = \sum\nolimits_{j} \sigma_j \vu_j V_{ij}. \] 

Now consider the Kronecker product SVD 
\[ \mA = \bmat{ \mT_1 & \mT_4 & \mT_7 & \mT_{10} \\
                \mT_2 & \mT_5 & \mT_8 & \mT_{11} \\
                \mT_3 & \mT_6 & \mT_9 & \mT_{12} } 
  = \sum\nolimits_j \mB_j \kron \mC_j. \]
Here, we have $\mB_j$ is $3 \times 4$ and $\mC_j$ is $k \times k$. Index $\mB_j$ as follows 
\[ \mB_j = \bmat{ B_j^{(1)} &  B_j^{(4)}  &  B_j^{(7)}  &  B_j^{(10)}  \\[1ex]
  				  B_j^{(2)} &  B_j^{(5)}  &  B_j^{(8)}  &  B_j^{(11)}  \\[1ex]
  				  B_j^{(3)} &  B_j^{(6)}  &  B_j^{(9)}  &  B_j^{(12)}  } \] 
Then note that 
\[ \mT_i = \sum\nolimits_j B_j^{(i)} \mC_j. \]   		

The final observation that connects the two is 
\[ B_j^{(i)} = V_{ij} \text{ and } \sigma_i \vu_j = \tvec(\mC_j). \]
In both cases, we represent each tile as a specific linear combination of basis elements. In the matrix-of-vectorized tiles $\mX$, these basis elements are columns $\vu_j$; in the Kronecker product SVD, these basis elements are the matrices $\mC_j$. 

In fact, the conversion from $\mA$ to $\mX$ to compute the Kronecker product SVD is precisely what is described by~\citet{VanLoan-1993-kronecker}.  

\subsection{Kronecker product structure}

The most strongly related work is \citet{Cai2022}, which makes a similar observation about the SVD of images compared with the Kronecker product SVD approximation. More generally, Kroencker product structure is important in low-parameter / low-Kronecker rank approximations of matrices involved in neural networks~\cite{Martens2015}. More recently, \citet{Kwon2023} suggests using more complicated neural approximations instead of direct matrix products, but with similar types of structure. In a different scenario, \citet{Bamberger2022} suggests using Kronecker structure as an alternative to classic dimension reduction.


 




\subsection{Tensor approximations}
Another place we've seen similar tiled organization is in tensor factorization studies, such as \citet{Wang2017}. In these results, an input database of images or video is reshaped into a large number of tensor modes in a fashion that is similar to the tiled approximations we describe. The result is an approximation that can use fewer parameters, but has more complex interactions. 

\section{Discussion}
This manuscript is the start of a broader discussion on this topic and is posted as a preprint to stimulate conversation. Although the case of Kronecker structure in images has been described, and the utility of Kronecker structure more generally has been noticed, the ideas of reorganization go beyond just this. For instance, the diagonal reorganization does not fit a Kronecker structure model. 
Finally, we note that the actual basis elements themselves produced by these approximations are useful, which is another direction we are also exploring. 

\begin{fullwidth}
\bibliographystyle{dgleich-bib}
\bibliography{refs}
\end{fullwidth}

%
%
%

\end{document}